\title{On prolongations of contact manifolds}
\author{Mirko Klukas}
\address{Mathematisches Institut der Universit\"at zu K\"oln,
Weyertal 86-90, 50931 K\"oln, Germany}
\email{mklukas@math.uni-koeln.de}
\urladdr{http://www.mi.uni-koeln.de/~mklukas}
\author{Bijan Sahamie}
\address{Mathematisches Institut der LMU M\"unchen, 
Theresienstrasse 39, 80333 M\"unchen, Germany}
\email{sahamie@math.lmu.de}
\urladdr{http://www.math.lmu.de/~sahamie}
\theoremstyle{plain} 
\newtheorem{theorem}{Theorem}[section]   
\newtheorem{lem}[theorem]{Lemma}         
\newtheorem{prop}[theorem]{Proposition}
\newtheorem{cor}[theorem]{Corollary}
\theoremstyle{definition}
\newtheorem*{ackn}{Acknowledgements}
\numberwithin{equation}{section}
\newcommand{\e}{\ensuremath{e}}
\newcommand{\N}{\mathbb{N}}
\newcommand{\pxi}{\mathbb{P}(\xi)}
\newcommand{\mD}{\mathcal{D}}
\newcommand{\sone}{\mathbb{S}^1}
\newcommand{\Z}{\mathbb{Z}}
\newcommand{\lra}{\longrightarrow}
\newcommand{\co}{\colon\thinspace}
\newcommand{\stwo}{\mathbb{S}^2}
\newcommand{\R}{\mathbb{R}}
\newcommand{\Hom}{\mbox{\rm Hom}}
\newcommand{\pfat}{{\boldsymbol p}}
\newcommand{\afat}{{\boldsymbol \alpha}}
\begin{document}
%
% Used fontsize to adjust the baselineskip to 14pt.
%
\fontsize{11}{14}\selectfont
\begin{abstract}
We apply spectral sequences to derive both
an obstruction to the existence of $n$-fold prolongations
and a topological classification. Prolongations
have been used in the literature in an attempt to prove that every Engel
structure on $M\times\sone$ with characteristic
line field tangent to the fibers is determined by the contact
structure induced on a cross section and the twisting of 
the Engel structure along the fibers. Our results show
that this statement needs some modification: to classify the
diffeomorphism type of the Engel structure we additionally have to
fix a class in the first cohomology of $M$.
\end{abstract}
\maketitle

%-------------------------------------------------------------
%
\section{Introduction}\label{sec:i}
%
%-------------------------------------------------------------
%
\noindent The goal of this note is to present a discussion 
of prolongations of contact manifolds. For an introduction to basic 
notions of Engel structures, we point 
the reader to \cite[\S 2.2]{monty}, \cite[\S 1.2]{adachi} 
and \cite{vogel}. Given 
a $3$-dimensional contact manifold $(M,\xi)$, we define its 
prolongation $\pxi$ as the $\sone$-bundle over $M$ obtained by 
projectivizing the contact planes $\xi$ 
(cf.~\S\ref{sec:adachicorrected} or see~\cite[\S 2.2]{monty}
and \cite[\S 1.2]{adachi}). In \cite{adachi}, 
Adachi discusses prolongations of contact manifolds and 
introduces a notion of an {\it $n$-fold prolongation}, which 
he defines as a \textit{fiberwise $n$-fold covering} of $\pxi$. 
This notion is then employed in an attempt to prove that Engel 
structures $\mD$ 
on $4$-manifolds $M\times\sone$, where $M$ is a closed, oriented 
$3$-manifold and $\mD$ is an Engel structure with characteristic 
foliation tangent to the $\sone$-fibers, are determined by the contact
structure induced on a cross section of $M\times\sone$ and the {\it twisting}
of $\mD$ along the $\sone$-fibers (see~\cite[Theorem~1(2)]{adachi}). His 
definition of the $n$-fold 
prolongation and the statement in his Theorem~1(2) 
in \cite{adachi} suggest that $n$-fold prolongations of contact 
manifolds always exist and are unique. Additionally, the proof of 
Theorem~1(2) in \cite{adachi} seems to rest in an essential way on a lifting 
argument, which however does not work in general 
(cf.~\S\ref{sec:aie}). In \S\ref{sec:aie} we discuss an 
example which shows that the data specified by Adachi 
in Theorem 1(2) in \cite{adachi} are not sufficient to determine the
isotopy class of the Engel structure $\mD$ on $M\times\sone$. 
In \S\ref{sec:pocm} we apply methods from 
spin geometry to provide a topological classification of $n$-fold 
prolongations.
\begin{theorem}\label{corol} A contact manifold $(M,\xi)$ admits an $n$-fold 
prolongation if and only if the mod-$n$ reduction $e_n(\pxi)$ of the 
Euler class $e(\pxi)$ vanishes. The isomorphism classes of $n$-fold 
prolongations are in one-to-one correspondence with elements 
of $H^1(M;\Z_n)$.
\end{theorem}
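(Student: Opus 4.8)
The plan is to recognise $\pxi\to M$ as an oriented circle bundle, i.e.\ a principal $U(1)$-bundle via $\SO(2)\cong U(1)$, classified by a map $f\co M\to BU(1)=K(\Z,2)$ with $f^*\iota=e(\pxi)\in H^2(M;\Z)$, where $\iota$ denotes the fundamental class. The key observation is that on each fibre the fibrewise $n$-fold covering is the map $z\mapsto z^n$, so an $n$-fold prolongation is the same datum as a principal $U(1)$-bundle $Q$ together with a bundle map $\pi\co Q\to\pxi$ equivariant for the $n$-th power homomorphism $p_n\co U(1)\to U(1)$; equivalently $\pxi\cong Q/\mu_n$, where $\mu_n\cong\Z_n$ is the group of $n$-th roots of unity. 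In classifying-space terms $Q$ corresponds to a lift of $f$ through the map $Bp_n\co BU(1)\to BU(1)$ induced by $p_n$, which on $H^2$ is multiplication by $n$. Thus an $n$-fold prolongation exists precisely when $f$ lifts through $Bp_n$.

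For the existence statement I would argue directly: a lift forces $e(\pxi)=n\,e(Q)$, so a prolongation exists exactly when $e(\pxi)$ is divisible by $n$ in $H^2(M;\Z)$. Feeding this into the Bockstein long exact sequence of $0\to\Z\xrightarrow{\times n}\Z\to\Z_n\to 0$ identifies divisibility by $n$ with lying in the kernel of the mod-$n$ reduction $\rho_n\co H^2(M;\Z)\to H^2(M;\Z_n)$, that is with the vanishing of $e_n(\pxi)=\rho_n\bigl(e(\pxi)\bigr)$. This gives part (1).

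For the classification I would analyse the fibration $Bp_n\co BU(1)\to BU(1)$. Since $p_n$ is surjective with kernel $\mu_n$, its homotopy fibre is $B\Z_n=K(\Z_n,1)$, and because $BU(1)=K(\Z,2)$ the fibration is principal, classified by a single $k$-invariant which is exactly the mod-$n$ reduction $\iota\mapsto\bar\iota$ of the fundamental class. Standard obstruction theory for a principal $K(\Z_n,1)$-fibration then yields both halves at once: lifts of $f$ exist if and only if the pulled-back $k$-invariant $f^*\bar\iota=e_n(\pxi)$ vanishes, and whenever they exist the set of vertical homotopy classes of lifts is a torsor over $[M,K(\Z_n,1)]=H^1(M;\Z_n)$. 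Choosing any one prolongation as a base point then produces the asserted bijection with $H^1(M;\Z_n)$.

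The main obstacle, and the point I would take care over, is the dictionary between the geometry and the homotopy theory: I must check that an isomorphism of $n$-fold prolongations, namely a bundle isomorphism $Q_1\to Q_2$ commuting with the covering maps to $\pxi$, corresponds exactly to a vertical homotopy of lifts. This is precisely where remembering the covering map $\pi$, and not merely the total space $Q$, matters, since the isomorphism classes of the bundles $Q$ alone form only a torsor over the $n$-torsion ${}_{n}H^2(M;\Z)\cong H^1(M;\Z_n)/\rho_n H^1(M;\Z)$, whereas the covering data recover the full group $H^1(M;\Z_n)$. A reader preferring to avoid classifying spaces can run the same computation with \v{C}ech cocycles, choosing $U(1)$-valued transition functions $g_{ij}$ for $\pxi$ and describing a prolongation as a choice of $n$-th roots $h_{ij}$ with $h_{ij}^n=g_{ij}$ forming a cocycle; comparing two such choices reproduces the $H^1(M;\Z_n)$-torsor. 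Finally I would note the spin-geometric analogy flagged in the abstract: the case $n=2$ is formally identical to the classification of spin structures on an oriented bundle, which exist if and only if $w_2$ vanishes and form a torsor over $H^1(M;\Z_2)$.
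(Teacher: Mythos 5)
Your argument is correct, but it reaches the result by a genuinely different route from the paper. The paper works \emph{upstairs} on the total space: it identifies a fiberwise $n$-fold covering with a class in $H^1(\pxi;\Z_n)$ restricting to the generator of $H^1(\sone;\Z_n)$ on a fibre, and extracts both the obstruction and the $H^1(M;\Z_n)$-torsor structure from the exact sequence
$0\to H^1(M;\Z_n)\to H^1(\pxi;\Z_n)\to H^1(\sone;\Z_n)\xrightarrow{d} H^2(M;\Z_n)$
coming from the Leray--Serre spectral sequence, with $d$ sending the generator to $e_n(\pxi)$ (Proposition~\ref{prop:prep} and Theorem~\ref{thm:main}, modelled on the Lawson--Michelsohn treatment of spin structures). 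You instead work \emph{downstairs} on $M$: a covering is a lift of the classifying map through $Bp_n\co K(\Z,2)\to K(\Z,2)$, existence becomes divisibility of $e(\pxi)$ by $n$ combined with the Bockstein sequence, and the torsor structure comes from the principal $K(\Z_n,1)$-fibration. Both are sound; your version makes the existence criterion and the distinction between isomorphism classes of coverings $(Q,\phi)$ and of bare bundles $Q$ especially transparent (the latter forming only a torsor over the $n$-torsion of $H^2(M;\Z)$, a point the paper's example in \S\ref{sec:aie} illustrates concretely), and you correctly isolate the one step needing care, namely that isomorphisms of coverings match vertical homotopies of lifts; your \v{C}ech reformulation via $n$-th roots $h_{ij}^n=g_{ij}$ of transition functions is the cleanest way to settle that and is essentially the computation in Lemma~\ref{lem:help}. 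The only piece you leave implicit is the passage from fiberwise coverings back to actual prolongations, i.e.\ Engel structures: you should add that a covering $\phi\co Q\to\pxi$ carries the Engel structure $(T\phi)^{-1}(\mD\xi)$ and that, conversely, every $n$-fold prolongation determines a fiberwise covering via its development map, so that the two classification problems coincide; the paper's proof of the theorem consists of exactly this observation together with its Theorem~\ref{thm:main}.
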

\noindent The main ingredient in the proof of this theorem is the 
classification of fiberwise $n$-fold coverings of $\sone$-bundles over 
a given manifold $M$. We provide this classification in Theorem~\ref{thm:main}. 
Its proof uses techniques coming from the characterization of spin structures. 
As a corollary, we are able to show that Engel structures on an
$\sone$-bundle over $M$ with
characteristic foliation tangent to the fibers are classified by the 
set of data specified by Adachi and a class in $H^1(M;\Z_n)$ (see Corollary~\ref{thm:main B}).
\vspace{0.3cm}\\
Finally, in \S\ref{sec:oecop} we investigate which contact structures 
admit $n$-fold prolongations. We prove the following general 
existence result. 
\begin{theorem}\label{prop:main2} Every $3$-dimensional contact 
manifold $(M,\xi)$ admits both $2$-fold prolongations 
and $4$-fold prolongations.
\end{theorem}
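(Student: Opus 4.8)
The plan is to deduce the statement from the cohomological criterion of Theorem~\ref{corol}: an $n$-fold prolongation of $(M,\xi)$ exists precisely when the mod-$n$ reduction $e_n(\pxi)$ of the Euler class $e(\pxi)$ vanishes. So the whole problem reduces to showing that $e(\pxi)$ is divisible by $2$ (for the $2$-fold case) and by $4$ (for the $4$-fold case). First I would pin down $e(\pxi)$ in terms of the Euler class $e(\xi)$ of the contact structure itself, viewed as an oriented $2$-plane bundle over $M$.

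The key computation is that projectivizing doubles the Euler class. Since $\pxi$ is the fiberwise projectivization of $\xi$, the unit circle bundle $S(\xi)$ surjects onto $\pxi$ by the fiberwise antipodal quotient $v\mapsto \R v$, which is the degree-$2$ map on each $\sone$-fiber. Identifying the cooriented bundle $\xi$ with a complex line bundle $L$, this quotient is canonically isomorphic to the unit circle bundle of $L^{\otimes 2}$ via $[v]\mapsto v\otimes v$, so that
\[
e(\pxi)=c_1(L^{\otimes 2})=2\,e(\xi).
\]
As a sanity check, for $M=\stwo$ with $\xi$ of Euler number $1$ the bundle $S(\xi)$ is the Hopf bundle $\mathbb{S}^3\to\stwo$, and $\pxi=\mathbb{S}^3/\{\pm 1\}=\R\mathbb{P}^3$, which is indeed the circle bundle of Euler number $2$.

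With this in hand the $2$-fold case is immediate: $e_2(\pxi)=2\,e(\xi)\bmod 2=0$, so $2$-fold prolongations always exist. For the $4$-fold case I must additionally show that $e(\xi)$ is itself even, so that $2\,e(\xi)\equiv 0 \pmod 4$. This is exactly where the orientability of the closed $3$-manifold $M$ enters: the coorientation of $\xi$ yields a splitting $TM\cong\xi\oplus\R$, whence $w_2(TM)=w_2(\xi)=e(\xi)\bmod 2$. But a closed oriented $3$-manifold has $w_1(TM)=0$, and by the Wu formula $w_2(TM)=0$ as well (equivalently, $M$ is parallelizable). Hence $e(\xi)$ is even; writing $e(\xi)=2k$ gives $e(\pxi)=4k$, so $e_4(\pxi)=0$ and a $4$-fold prolongation exists.

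I expect the only genuine content to be the identity $e(\pxi)=2\,e(\xi)$; once that is established, both divisibility statements follow from the elementary parity observation $e(\xi)\equiv w_2(TM)=0\pmod 2$. The point requiring the most care is the identification of the fiberwise antipodal quotient $S(\xi)/\{\pm 1\}$ with the circle bundle of $L^{\otimes 2}$, together with the compatibility of orientations — though for the mod-$n$ reductions any residual sign ambiguity in $e(\pxi)$ is irrelevant.
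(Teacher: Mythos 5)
Your proposal is correct, and its overall shape matches the paper's: both arguments reduce the theorem to showing $\e(\pxi)\in 4\cdot H^2(M;\Z)$ via the two steps $\e(\pxi)=2\,\e(\xi)$ and $2\mid \e(\xi)$. The differences are in how each step is justified. For the first step the paper observes that the unit circle bundle $\xi_1$ is a fiberwise $2$-fold covering of $\pxi$ and invokes Lemma~\ref{lem:help} to get $\e(\pxi)=2\,\e(\xi_1)=2\,\e(\xi)$; your identification of the antipodal quotient $S(\xi)/\{\pm1\}$ with the circle bundle of $L^{\otimes 2}$ proves the same identity by hand, and your $\R\mathbb{P}^3$ sanity check is apt. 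For the second step the paper first trivializes $TM$ (using parallelizability of oriented $3$-manifolds), writes $\xi=f_\xi^*T\stwo$ for the Gau\ss{} map $f_\xi$, and concludes $\e(\xi)=2\,f_\xi^*u_0$ from $\e(T\stwo)=2u_0$; you instead deduce evenness of $\e(\xi)$ from $\e(\xi)\equiv w_2(\xi)=w_2(TM)=0 \pmod 2$, using the splitting $TM\cong\xi\oplus\underline{\R}$ and the Wu formula on the closed oriented $M$. Both are sound; your route is slightly more economical and avoids choosing a trivialization, whereas the paper's explicit formula $\e(\pxi)=4\,f_\xi^*u_0$ buys more than the theorem itself: it identifies the quarter of $\e(\pxi)$ with the class classifying $\xi$ over the $2$-skeleton, which the paper then uses to realize every element of $4\cdot H^2(M;\Z)$ as an Euler class of a prolongation and to exhibit contact structures with no $n$-fold prolongation for $n\neq 2,4$. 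Your version establishes only the divisibility, which suffices for the stated theorem but not for those follow-up remarks.
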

\noindent This is derived by showing that the Euler class 
$e(\pxi)$ lies in the subgroup $4\cdot H^2(M;\Z)$ of $H^2(M;\Z)$.
Furthermore,  we see that, in fact, each element $q\in 4\cdot H^2(M;\Z)$ 
is given as the Euler class $e(\pxi)$ of the prolongation of a suitable 
contact structure $\xi$. Consequently, for every $n\not=2,4$ there 
is a contact structure which does not admit an $n$-fold 
prolongation.
\begin{ackn} The first author wishes to thank Hansj\"org Geiges for 
pointing his interest to the subject and for useful conversations.
%and for contributing the proof of Lemma \ref{lem:help}. 
We wish to thank the referee, the editor and especially Hansj\"org Geiges 
for useful comments which helped to improve the exposition.
\end{ackn}
%
%-------------------------------------------------------------
%
\section{An Introductory Example}\label{sec:aie}
%
%-------------------------------------------------------------
% 
\noindent In this section we provide an example which 
illustrates that Theorem~1(2) 
in \cite{adachi} needs some modification. We will investigate
fiberwise $n$-fold coverings of the $\sone$-bundle $T^3\times\sone\lra T^3$.
The considerations presented in this section show that fiberwise
$n$-fold coverings are in one-to-one correspondence with elements
of $H^1(T^3;\Z_n)$ (cf.~Theorem~\ref{thm:main}).\vspace{0.3cm}\\
We consider the $3$-dimensional torus $T^3$ with coordinates $(x,y,z)$. Given 
some vector $\afat\in\Z^3\cap[0,n-1]^3$, we define a fiberwise $n$-fold covering
$\phi_\afat\co T^3\times\sone \to T^3 \times \sone$ 
by 
\[
  \phi_\afat(\pfat,\theta)
  =
  \big(\pfat , n\theta+\left<\afat,\pfat\right> \big).
\]
Here we use the identification $\sone=\R/\Z$. The restriction
of $\phi_\afat$ to a fiber of $T^3\times\sone$ corresponds to the 
unique connected $n$-fold covering of the circle. Observe 
that $\afat$ determines a cohomology class in $H^1(T^3;\Z_n)$: by the 
universal coefficient theorem we know that the group $H^1(T^3;\Z_n)$ is 
isomorphic to the group $\Hom(H_1(T^3;\Z);\Z_n)$, whose elements --~by fixing 
the {\it standard} generators of $H_1(T^3;\Z)$~-- are uniquely determined 
by a vector in $\Z^3\cap[0,n-1]^3$.\\
For $\afat$ and $\afat'$ with $\afat\not=\afat'$, the coverings $\phi_\afat$ and $\phi_{\afat'}$ are not equivalent: Suppose 
they were equivalent, then there exists a covering isomorphism $\psi$ 
such that $\phi_\afat\circ \psi=\phi_{\afat'}$. The morphism $\psi_*$ 
on fundamental groups is given by the matrix
\[
 \psi_*
 =
 \left(
 \begin{matrix}
 1 & 0 & 0 & 0 \\
 0 & 1 & 0 & 0 \\
 0 & 0 & 1 & 0 \\
 a & b & c & 1
 \end{matrix}
 \right)
\]
for suitable integers $a,b,c\in\Z$ and we have that 
$(\phi_\afat)_*\psi_*=(\phi_{\afat'})_*$. This implies the equality
$
  \afat'
  =
  \afat
  +n\cdot (a,b,c)
$, which means that $\afat'=\afat$ considering the fact that they are both
vectors in $\Z^3\cap[0,n-1]^3$.\vspace{0.3cm}\\
Conversely, given a fiberwise $n$-fold covering 
$\phi\co T^3\times\sone \to T^3\times\sone$ we associate with it a cohomology 
class $\afat$ in $H^1(T^3;\Z_n)$ as follows:
As outlined above, we have to assign an element in $\Z_n$ to each of
the standard generators of $H_1(T^3;\Z)$. Take such a 
standard generator, $c$ say, and lift it to an
embedded circle $\widetilde{c}\subset T^3 \times \sone$.
Write $p_2$ for the projection of $T^3 \times \sone$ to the
$\sone$-factor. Then we define
\[
  \afat(c)=\mbox{\rm deg}(p_2\circ\phi\circ\widetilde{c})\;\mbox{\rm mod } n.
\]
Observe that this is well defined, since $\phi$ is a fiberwise $n$-fold
covering. This algorithm defines a morphism $\afat\co H_1(T^3;\Z)\lra\Z_n$,
which corresponds to a class in $H^1(T^3;\Z_n)$ by the universal coefficient 
theorem.
%
%-------------------------------------------------------------
%
\subsection{An Example of Non-Equivalent Prolongations}
%
%-------------------------------------------------------------
%
Choose $\xi$ to be the standard contact structure 
on $T^3$, i.e.~$\xi$ is defined as the kernel of the contact 1-form
given by
$
 \sin(2\pi z)dx+\cos(2\pi z)dy.
$
The contact planes are spanned by $\partial_z$ and
\[
	V_\pfat
	=\cos(2\pi z)\thinspace\partial_x+\sin(2\pi z)
	\thinspace\partial_y
\]
where $\pfat=(x,y,z)$. The prolongation $\mathbb{P}(\xi)$ can be 
naturally identified with the $4$-dimensional torus 
$T^3\times\sone$ with coordinates $(x,y,z,\theta)$ and the 
corresponding Engel structure $\mD(\xi)$ is 
spanned by the tangent vectors $\partial_\theta$ and
$
  \cos(\pi\theta)\thinspace\partial_z
  + 
  \sin(\pi\theta)\thinspace V_\pfat.
$
Using the fiberwise $n$-fold covering $\phi_\afat$ from above, we can
pull back the Engel structure $\mD(\xi)$, so that we get a new Engel 
structure, which we denote by $\mD^n_\afat(\xi)$. At the 
point $(\pfat,\theta)$ the Engel plane
$\mD^n_\afat(\xi)_{(\pfat,\theta)}$ is spanned by the tangent vectors
$\partial_\theta$ and
\[
 \cos\big(\pi\big(n\theta + \left<\afat,\pfat\right>\big)
 \big)
 \thinspace
 \partial_z
 +
 \sin\big(\pi\big(n\theta + \left<\afat,\pfat\right>\big)
 \big)
 \thinspace
 V_\pfat.
\]
The Engel manifolds $(T^3\times\sone,\mD^n_\afat(\xi))$ are pairwise 
non-equivalent if we allow only isotopies through Engel structures 
with characteristic
foliation tangent to the $\sone$-fibers. These {\it special} isotopies 
appear in the proof of Corollary~\ref{thm:main B}. If one considers
general isotopies through Engel structures, then it is not clear which of 
these structures are isotopic.
%
%-------------------------------------------------------------
%
\section{Characterization of Fiberwise Coverings}\label{sec:pocm}
%
%-------------------------------------------------------------
%
\noindent Let us denote by $\varphi_n\co\sone\to\sone$ the connected 
$n$-fold covering of the unit circle $\sone$, where $n$ is some 
positive integer. 
Suppose we are given an $\sone$-bundle $P\to M$ over a closed, oriented 
manifold $M$. We define a \textbf{fiberwise $n$-fold covering of $P$} 
as a pair $(Q,\phi)$, where $Q$ is an $\sone$-bundle over $M$ and $\phi$ 
is a smooth map $Q \to P$ such that its restriction $\left.\phi\right|_{Q_x}$, for every $x\in M$, is a map $Q_x\to P_x$ which corresponds to the $n$-fold
covering map $\varphi_n\co\sone\to\sone$. Before we move our focus to the characterization of fiberwise $n$-fold coverings, we show that their 
existence is tied to the following condition on the Euler classes of the bundles. 
\begin{lem}\label{lem:help}
Let $\pi_Q\co Q\to M$ and $\pi_P\co P\to M$ be two principal $\sone$-bundles 
such that there is a bundle map $\phi\co Q\to P$ whose restriction 
to the fibers corresponds to the map $\varphi_n$, then 
$\e(P)=n\cdot \e(Q)$.
\end{lem}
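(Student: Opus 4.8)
The plan is to identify $P$ with the $n$-th ``tensor power'' of $Q$ and then to invoke the multiplicativity of the Euler class. Concretely, the homomorphism $\varphi_n\co\sone\to\sone$, $z\mapsto z^n$, allows one to form the associated principal $\sone$-bundle $Q\times_{\varphi_n}\sone$, and the bundle map $\phi$ furnishes an isomorphism $Q\times_{\varphi_n}\sone\xrightarrow{\ \cong\ }P$, $[q,w]\mapsto\phi(q)\cdot w$. Since the Euler class of an associated bundle built from $\varphi_n$ is $n$ times the Euler class of the original bundle, the claim $\e(P)=n\cdot\e(Q)$ follows. I would, however, prefer to carry this out at the level of transition functions, which makes the factor $n$ completely transparent and avoids appealing to the associated-bundle formalism.

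So, first I would choose a good open cover $\{U_i\}$ of $M$ over which both $Q$ and $P$ trivialize, with transition functions $g_{ij}\co U_i\cap U_j\to\sone$ for $Q$ and $f_{ij}\co U_i\cap U_j\to\sone$ for $P$. Reading $\phi$ in these trivializations gives maps $\psi_i(x,z)$ which, for each fixed $x$, are $n$-fold coverings of the circle. Next I would compare $\psi_i$ and $\psi_j$ on the overlap $U_i\cap U_j$ using the two cocycles, obtaining the relation $\psi_i(x,z)=f_{ij}(x)\,\psi_j\big(x,g_{ij}(x)^{-1}z\big)$. Writing each local model in the normal form $\psi_i(x,z)=h_i(x)\,z^n$ for suitable $h_i\co U_i\to\sone$, this relation collapses to $f_{ij}=h_i\,h_j^{-1}\,g_{ij}^{\,n}$. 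Hence $[f_{ij}]=[g_{ij}]^{\,n}$ in the \v{C}ech cohomology group $\check{H}^1(M;\underline{\sone})$, where $\underline{\sone}$ denotes the sheaf of smooth $\sone$-valued functions and the group operation is pointwise multiplication of cocycles.

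Finally I would translate this identity into ordinary cohomology. The short exact sequence of sheaves $0\to\underline{\Z}\to\underline{\R}\to\underline{\sone}\to 0$ induces a connecting isomorphism $\check{H}^1(M;\underline{\sone})\xrightarrow{\ \cong\ }H^2(M;\Z)$ which is a group homomorphism and sends the class of a principal $\sone$-bundle to its Euler class. Applying it to $[f_{ij}]=[g_{ij}]^{\,n}$ yields $\e(P)=n\cdot\e(Q)$, since the isomorphism carries the $n$-fold product $[g_{ij}]^{\,n}$ to $n\cdot\e(Q)$.

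The step I expect to be the main obstacle is the passage to the local normal form $\psi_i(x,z)=h_i(x)z^n$: an arbitrary fiberwise $n$-fold covering need not literally be $z\mapsto h_i(x)z^n$, but only a covering map of degree $n$ over each point. Over a contractible chart $U_i$ one has to argue that such a family can be straightened---e.g.\ fiberwise homotoped through covering maps---to the standard model without altering the Euler class, which depends only on the homotopy class of $\phi$; alternatively, if one reads ``bundle map'' as an $\sone$-equivariant morphism inducing $\varphi_n$ on structure groups, this normal form is automatic and the isomorphism with $Q\times_{\varphi_n}\sone$ of the first paragraph can be used directly.
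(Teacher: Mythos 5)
Your proposal is correct and follows essentially the same route as the paper: both read $\phi$ in local trivializations, extract the relation $g^P_{\alpha\beta}=\mu_\alpha\mu_\beta^{-1}(g^Q_{\alpha\beta})^n$ between the two \v{C}ech cocycles (the paper writes this additively in $\R/\Z$), and conclude via the isomorphism $H^1(M;\underline{\sone})\cong H^2(M;\Z)$ that the Euler classes differ by the factor $n$. The normal-form issue you flag at the end is glossed over in the paper in exactly the same way, and your remark that it is automatic for an $\sone$-equivariant bundle map covering $\varphi_n$ is the right resolution.
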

\begin{proof} The Euler class provides an isomorphism 
\[
  H^1(M;\sone)\overset{e}{\lra} H^2(M;\Z),
\]
where $H^1(M;\sone)$ naturally corresponds to equivalence 
classes of principle $\sone$-bundles over $M$. So, the statement of the lemma can
be rephrased in terms of \v{C}ech cochains: Let 
$\mathfrak{U}=\{U_\alpha\}_{\alpha}$ be an open covering of $M$, 
let $\{g^Q_{\alpha\beta}\}$ be the \v{C}ech cochain representing 
the bundle $Q$ and let $\{g^P_{\alpha\beta}\}$ be the 
\v{C}ech cochain representing $P$. 
We have to prove that the \v{C}ech cohomology classes 
$[\{n\cdot(g^Q_{\alpha\beta})\}]=n\cdot[\{(g^Q_{\alpha\beta})\}]$ 
and $[\{g^P_{\alpha\beta}\}]$ are equal.\vspace{0.3cm}\\
Associated to the open covering $\mathfrak{U}$, the bundles $Q$ and $P$
admit bundle charts $\{t^Q_\alpha\}$ and $\{t^P_\alpha\}$. Now consider
the maps $\phi_\alpha=t^P_{\alpha}\circ\phi\circ (t^Q_\alpha)^{-1}$. Since
the restriction of $\phi$ to the fibers of $Q$ corresponds to 
$\varphi_n$, the equality
\[
 \phi_\alpha(p,\theta)=(p,\mu_\alpha(p)+n\cdot\theta)
\]
holds for suitable $\mu_\alpha\co U_\alpha\lra\sone$. Because 
the $\phi_\alpha$ come from the well-defined bundle map $\phi$, we 
have that
\[
 \phi_\beta(g^Q_{\alpha\beta}(p,\theta))
 =
 g^P_{\alpha\beta}(\phi_\alpha(p,\theta))
\]
for all $p\in U_\alpha\cap U_\beta$ and 
$\theta\in\sone$. This is equivalent to saying that
\[
 \mu_\beta(p)
 +n\cdot g^Q_{\alpha\beta}(p)
 -\mu_\alpha(p)
 =g^P_{\alpha\beta}(p)
\]
for all $p\in U_\alpha\cap U_\beta$. Hence, 
$\{n\cdot(g^Q_{\alpha\beta})\}$ and $\{g^P_{\alpha\beta}\}$ 
differ by the boundary of the \v{C}ech-$1$-cochain 
$\{\mu_\alpha\}$ and the result 
follows (cf.~\cite[Appendix A]{LaMi}).
\end{proof}
\noindent From the relationship of the Euler classes presented in 
Lemma~\ref{lem:help} we see that the mod $n$ reduction of the Euler
class is an obstruction to the existence of fiberwise $n$-fold coverings, 
i.e.~the existence of fiberwise $n$-fold coverings imply the vanishing 
of the mod $n$ reduction of the Euler class. In fact, the converse statement
is true as well (see~Theorem~\ref{thm:main}). Its proof will occupy the 
remainder of this section.
As in the characterization of 
spin structures used in \cite[\S 1]{LaMi} 
(see~especially sequences~(1.2) and (1.4) of \cite{LaMi}), we 
find it opportune to work with \v{C}ech cohomology. 
\begin{prop}\label{prop:prep} An $\sone$-bundle 
$\sone\lra P\lra M$ induces the following long exact sequence
\[
 \xymatrix@C=2pc@R=0.1pc
 {
 0\ar[r]
 &
 H^1(M;\Z_n)\ar[r]^{\pi^*}
 & 
 H^1(P;\Z_n)
 \ar[r]^{\iota^*}
 &
 H^1(\sone;\Z_n)\ar[r]^{d}
 &
 H^2(M;\Z_n),
 }
\]
where the map $d$ sends the generator of $H^1(\sone;\Z_n)$ to
the $mod\;n$ reduction of the Euler class $\e(P)$.
\end{prop}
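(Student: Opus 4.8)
The plan is to read the asserted sequence off the Serre spectral sequence of the circle bundle $\sone\lra P\overset{\pi}{\lra}M$ with coefficients in $\Z_n$. Since $P$ is an oriented $\sone$-bundle, $\pi_1(M)$ acts trivially on the cohomology of the fibre, so the local system is constant and we obtain an untwisted spectral sequence
\[
 E_2^{p,q}=H^p\big(M;H^q(\sone;\Z_n)\big)\Longrightarrow H^{p+q}(P;\Z_n).
\]
Because $H^q(\sone;\Z_n)$ equals $\Z_n$ for $q=0,1$ and vanishes otherwise, the $E_2$-page is concentrated in the two rows $q=0$ and $q=1$, with $E_2^{p,0}\cong E_2^{p,1}\cong H^p(M;\Z_n)$. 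In such a two-row spectral sequence the only differential that can be nonzero is $d_2\co E_2^{p,1}\lra E_2^{p+2,0}$, so that $E_3=E_\infty$.

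First I would write down the five-term exact sequence attached to such a spectral sequence, namely
\[
 0\lra E_2^{1,0}\overset{\pi^*}{\lra}H^1(P;\Z_n)\overset{\iota^*}{\lra}E_2^{0,1}\overset{d_2}{\lra}E_2^{2,0}\lra H^2(P;\Z_n),
\]
and discard its last term. The first edge homomorphism is the pullback $\pi^*$, and it is injective because no differential enters or leaves the spot $(1,0)$, whence $E_\infty^{1,0}=E_2^{1,0}$ is the bottom filtration piece of $H^1(P;\Z_n)$. The second edge homomorphism is the restriction $\iota^*$ to a fibre $\sone\hookrightarrow P$, and its image is exactly $E_\infty^{0,1}=\ker d_2$; this yields exactness at $H^1(P;\Z_n)$ and at $E_2^{0,1}$. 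Under the canonical identifications $E_2^{1,0}=H^1(M;\Z_n)$, $E_2^{0,1}=H^1(\sone;\Z_n)$ and $E_2^{2,0}=H^2(M;\Z_n)$ this is precisely the sequence in the statement, with $d=d_2$.

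It remains to evaluate $d=d_2$ on the generator of $H^1(\sone;\Z_n)=E_2^{0,1}$, that is, to compute the transgression; showing that it equals the mod-$n$ reduction $\e_n(P)$ of the Euler class is the main obstacle. The transgression of the fundamental cohomology class of the fibre in an oriented circle bundle is its Euler class; this is classical and may simply be cited, and by naturality of the spectral sequence under the coefficient reduction $\Z\lra\Z_n$ the $\Z_n$-transgression is then the mod-$n$ reduction $\e_n(P)$. For a self-contained argument in the \v{C}ech spirit of the rest of the section I would instead compute the transgression directly: choosing a trivialising cover $\mathfrak{U}=\{U_\alpha\}$ of $M$ with transition functions $g_{\alpha\beta}\co U_\alpha\cap U_\beta\lra\sone$, the fibre class restricts on each $\pi^{-1}(U_\alpha)\cong U_\alpha\times\sone$ to a standard generator, and running the transgression zig-zag across the overlaps feeds in the $g_{\alpha\beta}$ and reproduces, after reduction mod $n$, the same \v{C}ech $2$-cocycle that represents $\e(P)$ in the proof of Lemma~\ref{lem:help} (cf.~\cite[Appendix A]{LaMi}). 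Either way $d$ sends the generator of $H^1(\sone;\Z_n)$ to $\e_n(P)$, which completes the proof.
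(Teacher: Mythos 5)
Your argument follows essentially the same route as the paper's own (sketched) proof: the Leray--Serre spectral sequence of the circle bundle with $\Z_n$-coefficients, the observation that the $E_2$-page is concentrated in the rows $q=0,1$ so that the five-term exact sequence gives the claimed sequence with $\pi^*$ and $\iota^*$ as edge homomorphisms, and the identification of $d_2^{0,1}$ with the mod-$n$ reduction of the Euler class via transgression. If anything, your write-up is more complete than the paper's, which defers both the triviality of the coefficient system and the transgression computation to references; your two suggested ways of pinning down $d$ (citing the classical transgression of the fibre class plus naturality under $\Z\to\Z_n$, or the direct \v{C}ech zig-zag) are both valid ways to fill that gap.
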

\noindent To give a bit of explanation, observe, that 
an $n$-fold covering of the space $P$ corresponds to an element 
in $H^1(P;\Z_n)$ (cf.~\cite[Appendix A]{LaMi}). A fiberwise $n$-fold 
covering $(Q,\phi)$ is an 
ordinary $n$-fold covering and, thus, we may think of the pair 
$[(Q,\phi)]$ (or simply $[Q]$) as an element in the first 
cohomology of $P$. The 
statement that it is fiberwise $\varphi_n$ is equivalent to saying 
that the pullback bundle $\iota^*(Q,\phi)$ is isomorphic 
to $\varphi_n$. In terms of the exact sequence presented 
in Proposition~\ref{prop:prep}, this amounts to the equality 
$\iota^*[Q]=[\varphi_n]$ where $[\varphi_n]$ is a 
generator of $H^1(\sone;\Z_n)$ (cf.~\cite[\S 1]{LaMi} 
and \cite[Appendix~A]{LaMi}).

\begin{theorem}\label{thm:main} Given an $\sone$-bundle $P \to M$, a
fiberwise $n$-fold covering 
of $M$ exists if and only if
the mod $n$ reduction $\e_n(E)$ of the Euler class $\e(E)$ is zero.
In this case, the isomorphism classes of fiberwise $n$-fold 
coverings
of $M$ are in one-to-one correspondence with elements of
$H^1(M;\Z_n)$.
\end{theorem}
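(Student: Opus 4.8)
The plan is to use the long exact sequence from Proposition~\ref{prop:prep} as the backbone, since it already encodes both the existence condition and the classification. First I would establish the dictionary, already hinted at in the excerpt: a fiberwise $n$-fold covering $(Q,\phi)$ is in particular an ordinary $n$-fold covering of the total space $P$, hence determines a class $[Q]\in H^1(P;\Z_n)$, and the fiberwise condition is precisely the statement that $\iota^*[Q]$ equals the fixed generator $[\varphi_n]\in H^1(\sone;\Z_n)$. So the set of (isomorphism classes of) fiberwise $n$-fold coverings is identified with the preimage $(\iota^*)^{-1}([\varphi_n])\subset H^1(P;\Z_n)$, and the entire theorem reduces to understanding this preimage by exactness.

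\medskip

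For the existence half, I would read off the sequence directly. The class $[\varphi_n]$ lies in the image of $\iota^*$ if and only if it maps to zero under the connecting homomorphism $d$, since exactness at $H^1(\sone;\Z_n)$ gives $\mathrm{im}\,\iota^*=\ker d$. By the description in Proposition~\ref{prop:prep}, $d$ sends the generator to the mod-$n$ reduction $\e_n(P)$ of the Euler class. Hence $(\iota^*)^{-1}([\varphi_n])$ is nonempty exactly when $\e_n(P)=0$, which is the asserted existence criterion. (This is the direction for which Lemma~\ref{lem:help} already supplied the ``only if''; the exact sequence now delivers the converse.)

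\medskip

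For the classification half, I would argue that once the preimage is nonempty, it is a coset of $\ker\iota^*$. Fixing one fiberwise covering $[Q_0]$, any other $[Q]$ satisfies $\iota^*([Q]-[Q_0])=0$, so $[Q]-[Q_0]\in\ker\iota^*=\mathrm{im}\,\pi^*$ by exactness at $H^1(P;\Z_n)$. Since the sequence begins with $0\to H^1(M;\Z_n)\xrightarrow{\pi^*}$, the map $\pi^*$ is injective, so $\mathrm{im}\,\pi^*\cong H^1(M;\Z_n)$. Choosing the base point $[Q_0]$ then gives a bijection from $H^1(M;\Z_n)$ onto the set of fiberwise coverings via $a\mapsto [Q_0]+\pi^*(a)$.

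\medskip

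The step I expect to need the most care is not the homological algebra, which is essentially a formal consequence of exactness, but rather verifying that the set-theoretic dictionary is faithful: namely that \emph{isomorphism classes} of fiberwise $n$-fold coverings correspond bijectively to \emph{classes in $H^1(P;\Z_n)$ satisfying the fiberwise condition}, and that the induced $\sone$-bundle structure on $Q$ is intrinsic to the covering data. One must check that two fiberwise coverings are equivalent as such precisely when their classes in $H^1(P;\Z_n)$ coincide, so that the coset structure descends to genuine isomorphism classes; the explicit torus computation in \S\ref{sec:aie} is the model case confirming that distinct classes yield non-equivalent coverings. I would therefore spell out this identification carefully, referring to the characterization of coverings via $H^1(\,\cdot\,;\Z_n)$ as in \cite[Appendix~A]{LaMi}, and only then invoke exactness to conclude.
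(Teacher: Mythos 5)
Your proposal is correct and follows essentially the same route as the paper: both reduce the theorem to the exact sequence of Proposition~\ref{prop:prep}, identify fiberwise $n$-fold coverings with the preimage $(\iota^*)^{-1}([\varphi_n])$, obtain the existence criterion from $\operatorname{im}\iota^*=\ker d$ with $d[\varphi_n]=\e_n(P)$, and obtain the classification from the free and transitive action of $H^1(M;\Z_n)$ via $\pi^*$ on that preimage. Your additional care in verifying the dictionary between isomorphism classes of coverings and classes in $H^1(P;\Z_n)$ is a point the paper handles only by reference to \cite[Appendix~A]{LaMi}, but it is not a different argument.
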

\begin{proof}
By Proposition~\ref{prop:prep} the following sequence is exact:
\[
 \xymatrix@C=2pc@R=0.1pc
 {
 0\ar[r]
 &
 H^1(M;\Z_n)\ar[r]^{\pi^*}
 & 
 H^1(P;\Z_n)
 \ar[r]^{\iota^*}
 &
 H^1(\sone;\Z_n)\ar[r]^{d}
 &
 H^2(M;\Z_n).
 }
\]
The mod $n$ reduction of the Euler class 
$\e(P)$ will be denoted by $\e_n$. Suppose that a
fiberwise $n$-fold covering $Q$ of $P$ exists. Then $[Q]$ is an 
element of $H^1(P;\Z_n)$ such that 
$\iota^*[Q]=[\varphi_n]$. Thus, by exactness of the 
sequence,
\[
  0=d(\iota^*[Q])=d[\varphi_n]=e_n.
\]
Conversely, assuming that $e_n=0$, we have 
$d[\varphi_n]=e_n=0$. By exactness, this implies
the existence of an element $q\in H^1(P;\Z_n)$ which is 
mapped to $[\varphi_n]$ under $\iota^*$. But $q$ corresponds 
to a fiberwise $n$-fold covering of $P$.\\
The isomorphism classes of fiberwise coverings correspond 
to the set $(\iota^*)^{-1}([\varphi_n])$ on which 
$\pi^*$ --~by the sequence above~-- induces a free and 
transitive $H^1(M;\Z_n)$-action. Hence, we obtain a 
one-to-one correspondence between $H^1(M;\Z_n)$ and the 
isomorphism classes of fiberwise coverings. 
\end{proof}
\noindent It remains to prove Proposition~\ref{prop:prep}. We just 
sketch the proof, since it is analogous 
to the proofs of the exact sequences used in the 
characterization of spin structures (see~\cite[\S1]{LaMi}).
\begin{proof}[Sketch of Proof of Proposition~\ref{prop:prep}]
We look at the Leray-Serre spectral sequence with $E_2$-page given by
$E_2^{p,q}=H^p(M;H^q(\sone;\Z_n))$ (see~\cite{cleary}).
By applying the fact that $H^q(\sone;\Z_n)$ is non-zero 
for $q=0,1$ only, we see that $E_\infty^{1,0}=E_2^{1,0}=H^1(M;\Z_n)$ 
and that $E_\infty^{0,1}=E_3^{0,1}$. Thus, we obtain the following 
exact sequence
\[
 \xymatrix@C=2pc@R=0.1pc
 {
 0\ar[r]
 &
 H^1(M;\Z_n)\ar[r]
 & 
 H^1(P;\Z_n)
 \ar[r]
 &
 E_2^{0,1}\ar[r]^{d_2^{0,1}}
 &
 H^2(M;\Z_n).
 }
\]
In fact, it is not hard to see that $E_2^{0,1}$ equals $H^1(\sone;\Z_n)$ and 
we obtain the exact sequence
as proposed. With a discussion similar to the spin 
case (see~\cite{Klukas} and cf.~\cite[\S 1]{LaMi}) it is possible to prove 
that $d_2^{0,1}$ sends the generator of $H^1(\sone;\Z_n)$ to the 
mod-$n$ reduction of the Euler class $\e(P)$.
\end{proof}
%

%-------------------------------------------------------------
%
\section{Engel Structures with trivial Characteristic Line Field}\label{sec:adachicorrected}
%
%-------------------------------------------------------------
%
\noindent An \textbf{Engel structure} is a maximally non-integrable $2$-plane
distribution $\mD$ on a $4$-dimensional manifold $Q$, i.e.~$\mD$ is defined
as a $2$-plane bundle for which $\mathcal{E}=[\mD,\mD]$ is of rank $3$ and
$[\mathcal{E},\mathcal{E}]$ of rank $4$. Inside the Engel structure $\mD$ there is
a line field $\mathcal{L}$ given by the condition that 
$[\mathcal{L},\mathcal{E}]\subset\mathcal{E}$. This line field is 
called the {\bf characteristic line field} and its induced foliation 
on $Q$ the
{\bf characteristic foliation} of $\mD$.
Engel structures arise in a natural way as prolongations of contact
$3$-manifolds. Given a contact $3$-manifold $(M,\xi)$ one can
consider the bundle $\mathbb{P}\xi$ whose fibers are the projectivizations 
of the contact planes, i.e.~for every $p\in M$ a 
point $q\in(\mathbb{P}\xi)_p$ corresponds to a line $l\subset \xi_p$ in the contact
plane. Note that by construction this
$4$-manifold carries the structure of an $\sone$-bundle 
$\rho:\mathbb{P}\xi\to M$ over $M$. Furthermore, we obtain a natural 
plane distribution $\mathcal{D}\xi \subset T \thinspace \mathbb{P}\xi$ 
given by
\[
  (\mathcal{D}\xi)_q = T_q\rho^{-1}(l).
\]
This distribution defines an Engel structure whose
characteristic line field is tangent to the 
$\sone$-fibers of the bundle.\vspace{0.3cm}\\
Now, assume we are given an oriented $\sone$-bundle $\pi: Q \to M$ over some
$3$-manifold $M$ and an Engel structure $\mD \subset TQ$ with
characteristic line field $\mathcal{L}$ tangent to the fibers. Since the
induced distribution $[\mathcal{D},\mathcal{D}] \subset TQ$ is
preserved by any flow tangent to $\mathcal{L}$, we obtain a well-defined 
contact structure $\xi=\pi_*([\mathcal{D},\mathcal{D}])$ on $M$. 
Furthermore, one obtains the {\bf development map}
\[
  \phi_\mathcal{D}\colon\thinspace  (Q,\mathcal{D})
  \longrightarrow (\mathbb{P}\xi, \mathcal{D}\xi)
\]
by assigning to a point $q \in Q$ the element 
$\phi_\mD(q)$ in $(\pxi)_{\pi(q)}$ which
corresponds to the $1$-dimensional subspace $T_q\pi(\mathcal{D}_q)$
of the contact plane $\xi_{\pi(q)}$. Note that $\phi_\mD(\mD)=\mD\xi$ and 
that $\phi_\mD$ defines a fiberwise 
$n$-fold covering of $\pxi$, where $n\in \N$ denotes the degree of the 
development map restricted to a fiber. 
We will refer to such $(Q,\mD)$ as an \textbf{$n$-fold prolongation 
of $(M,\xi)$}.
\begin{proof}[Proof of Theorem~\ref{corol}] 
Suppose that $e_n(\pxi)$ vanishes. Then Theorem~\ref{thm:main} implies 
the existence of a fiberwise $n$-fold covering 
$\phi\co Q\lra \pxi$. The space $Q$ is itself a $\sone$-bundle 
over $M$. We define an Engel structure $\mD$ on $Q$ by setting
$\mathcal{D} = (T\phi)^{-1}\big(\mD(\xi)\big)$.\\
Conversely, given an $n$-fold prolongation, the development map is a fiberwise
$n$-fold covering of $\pxi$, which by Lemma~\ref{lem:help} or Theorem~\ref{thm:main}
implies the vanishing of $e_n(\pxi)$.\\
By the statement of Theorem~\ref{thm:main} and the discussion from above we see 
that --~in case of existence~-- $n$-fold prolongations are in one-to-one 
correspondence with elements of $H^1(M;\Z_n)$.
\end{proof}
%
%
%\noindent Now we have everything ready to prove a modified version of \cite[Theorem 1(2)]{adachi}.
%-------------------------------------------------------------
%
\begin{cor}
\label{thm:main B}
Suppose we are given an oriented Engel structure $\mD$ on 
an $\sone$-bundle $Q$ over $M$ whose
characteristic line field is tangent to the $\sone$-fibers.  Denote by $\xi$
the contact structure on the base given by $\xi=\pi_*([\mD,\mD])$ where $\pi$ is
the projection of $Q$ onto $M$. Let $n \in \N$ denote the twisting
number of $\mD$ and let $\alpha$ be a suitable class in $H^1(M;\Z_n)$ associated
to $\mD$. Then $\mD$ is classified up to Engel diffeomorphism by the set of 
data $(\xi,n,\alpha)$.
\end{cor}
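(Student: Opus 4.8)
The plan is to assemble the proof of Corollary~\ref{thm:main B} from the two main ingredients already in hand, namely the development map construction and the classification of fiberwise $n$-fold coverings in Theorem~\ref{thm:main}. First I would observe that an oriented Engel structure $\mD$ on $Q$ with characteristic line field tangent to the fibers canonically produces the triple $(\xi,n,\alpha)$: the contact structure $\xi=\pi_*([\mD,\mD])$ is well defined by the discussion opening \S\ref{sec:adachicorrected}, the twisting number $n$ is the fiber degree of the development map $\phi_\mD\co (Q,\mD)\to(\pxi,\mD\xi)$, and the class $\alpha\in H^1(M;\Z_n)$ is the one assigned to the fiberwise $n$-fold covering $\phi_\mD$ under the bijection of Theorem~\ref{thm:main}. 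So each Engel structure of the stated type determines such a triple.

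Next I would argue that the triple $(\xi,n,\alpha)$ recovers $\mD$ up to Engel diffeomorphism, by reversing the construction. Given the data, Theorem~\ref{thm:main} guarantees (because $\alpha$ exists, forcing $e_n(\pxi)=0$) a fiberwise $n$-fold covering $\phi\co Q'\to\pxi$ whose isomorphism class corresponds to $\alpha$, and the proof of Theorem~\ref{corol} shows that $\mathcal{D}'=(T\phi)^{-1}(\mD\xi)$ is an Engel structure on $Q'$ with characteristic line field tangent to the fibers. The key point is that the development map of $(Q',\mathcal{D}')$ agrees with $\phi$ up to fiberwise covering isomorphism, so the triple associated to $(Q',\mathcal{D}')$ is again $(\xi,n,\alpha)$. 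It therefore suffices to show that two Engel structures $\mD_1,\mD_2$ on $\sone$-bundles $Q_1,Q_2$ yielding the same triple are related by an Engel diffeomorphism; and this follows because an isomorphism $\psi$ of the underlying fiberwise coverings $\phi_{\mD_1}$ and $\phi_{\mD_2}$ intertwines the pulled-back distributions $(T\phi_{\mD_i})^{-1}(\mD\xi)$ and hence carries $\mD_1$ to $\mD_2$.

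The main obstacle will be verifying that the development map construction is genuinely inverse to the pullback construction at the level of Engel structures, and not merely at the level of $\sone$-bundles. Concretely, I would need to check that $\phi_\mD(\mD)=\mD\xi$ (noted in \S\ref{sec:adachicorrected}) makes $\mD$ equal to $(T\phi_\mD)^{-1}(\mD\xi)$ on the nose, i.e.~that no information about $\mD$ is lost beyond $\xi$, $n$, and the covering isomorphism class $\alpha$. This rests on the fact that $\mD$ is determined fiberwise by the line it selects inside $\xi_{\pi(q)}$, which is exactly the data encoded by $\phi_\mD$; the orientation hypothesis on $\mD$ ensures the characteristic line field is coherently oriented so that the covering degree $n$ is well defined and the matching is canonical.

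Finally I would address the precise notion of equivalence. The classification is up to Engel diffeomorphism, and the isotopies implicit in Theorem~\ref{thm:main} are the fiberwise covering isomorphisms, which are exactly the \emph{special} isotopies through Engel structures with characteristic foliation tangent to the $\sone$-fibers mentioned in \S\ref{sec:aie}. I would emphasize that it is with respect to this class of isotopies that the triple $(\xi,n,\alpha)$ is a complete invariant, thereby reconciling the result with the introductory example: the class $\alpha\in H^1(M;\Z_n)$ is precisely the extra datum beyond Adachi's $(\xi,n)$ that Theorem~1(2) of \cite{adachi} omits, and supplying it restores a valid classification.
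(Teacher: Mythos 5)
Your proposal is correct and follows essentially the same route as the paper: both reduce the statement to the classification of fiberwise $n$-fold coverings in Theorem~\ref{thm:main}, obtain a covering isomorphism $\psi$ intertwining the development maps $\phi_{\mD_0}$ and $\phi_{\mD_1}$, and conclude that $\psi$ is an Engel diffeomorphism because each $\mD_i$ equals $(T\phi_{\mD_i})^{-1}(\mD\xi)$. You merely spell out details (the identity $\mD=(T\phi_\mD)^{-1}(\mD\xi)$ and the realizability of every triple) that the paper's very terse proof leaves implicit.
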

\begin{proof}
Let $Q$ be an $\sone$-bundle over a closed, oriented $3$-manifold $M$ and
let $\mD_0$ and $\mD_1$ be two Engel structures on $Q$ which induce the same 
set of data $(\xi,n,\alpha)$. According to our classification of 
fiberwise $n$-fold coverings of $\pxi$ the 
$n$-fold prolongations $(Q,\mD_0)$ and $(Q,\mD_1)$ are both 
equivalent, i.e.~there is an isomorphism $\psi$ which makes the 
following diagram commutative
\[
 \xymatrix@C=3pc@R=3pc
 {
  (Q,\mD_0) \ar[rr]^{\psi}
  \ar[dr]^{\phi_{\mD_0}} & &
  (Q,\mD_1) \ar[dl]_{\phi_{\mD_1}}\\
  & \mathbb{P}(\xi) &
 }
\]
Conversely, to every triple $(\xi,n,\alpha)$ we can construct an Engel structure
with these data.
\end{proof}

\section{On Euler Classes of Prolongations}\label{sec:oecop}
\noindent In \S\ref{sec:pocm} we derived a characterization of 
fiberwise $n$-fold coverings and we have seen that the mod $n$ reduction 
of the Euler class determines their 
existence (see~Theorem~\ref{thm:main}). In \S\ref{sec:adachicorrected} 
we have seen that every $n$-fold prolongation $(Q,\mD)$ 
of a contact manifold $(M,\xi)$ naturally carries the structure of a fiberwise 
$n$-fold covering of the prolongation $\pxi$ via the development 
map $\phi_\mD$ (see~Theorem~\ref{corol}).
Thus, the existence of $n$-fold prolongations of contact manifolds is connected
to the vanishing of the mod $n$ reduction of the Euler class $e(\pxi)$.
This section is devoted to determining which contact structures admit $n$-fold prolongations. We start with the proof of Theorem~\ref{prop:main2}.
\begin{proof}[Proof of Theorem~\ref{prop:main2}]
Suppose we are given an oriented contact manifold $(M,\xi)$.
Choose a Riemannian metric on $M$ and a trivialization of the 
tangent bundle $TM$. Then the 2-plane field $\xi$ can be described in terms
of its corresponding Gau\ss \ map $f_\xi:M \to S^2$, which assigns 
to each $x \in M$ the positive normal vector to $\xi_x$. In 
fact, $f_\xi^*TS^2=\xi$. Denote by $u_0$ the positive generator of
$H^2(\stwo;\Z)$. Combining naturality of 
the Euler class under pullback with the fact that $\e(TS^2) = 2\cdot u_0$, 
we conclude that 
$
  \e(\xi) = 2 \cdot f_\xi^*u_0.
$
The unit-sphere bundle $\xi_1$ of $\xi$ is a fiberwise $2$-fold
covering of the prolongation $\pxi$, so that Lemma~\ref{lem:help}
implies 
$
 \e(\pxi)
 =
 2\cdot
 \e(\xi_1).
$
Because of $\e(\xi)=\e(\xi_1)$ we have
\[
  \e(\pxi) = 4 \cdot f_\xi^*u_0.
\]
Hence, both the mod $4$ reduction $\e_4(\pxi)$ and the mod $2$ 
reduction $\e_2(\pxi)$ of the Euler class $\e(\pxi)$ vanish. 
By Theorem~\ref{thm:main} the result follows.
\end{proof}
%
%-------------------------------------------------------------
\noindent As a consequence of the last proof we see that $n$-fold
prolongations of contact manifolds $(M,\xi)$ often do not exist: Recall
that the class $f_\xi^*u_0$ classifies the homotopy type of
$\xi$ over the $2$-skeleton of $M$ (cf.~\cite[\S4.2]{Geiges}).
Since in every homotopy class of $2$-plane fields there is an overtwisted
contact structure, it is easy to find examples for which 
$\e(\pxi) = 4 \cdot f_\xi^*u_0$ is non-zero when reduced modulo $n$.

\end{document}